\documentclass[11pt]{amsart}
\usepackage[utf8]{inputenc}
\usepackage[margin=1in]{geometry}
\usepackage{hyperref}
\usepackage{amsfonts}
\usepackage{amsmath}
\usepackage{amsthm}
\usepackage{enumerate}
\usepackage{mathrsfs}
\usepackage{tikz}

\numberwithin{equation}{section}

\newcommand{\sgn}{\mathop{\mathrm{sgn}}}
\newcommand{\indentitem}{\setlength\itemindent{.25in}}
\newcommand{\D}{\mathscr{D}}
\newcommand{\R}{\mathbb{R}}
\newcommand{\dom}{\operatorname{dom}}

\theoremstyle{definition}
\newtheorem{defn}{Definition}
\newtheorem{lem}{Lemma}

\newtheorem{thm}{Theorem}

\newtheorem{prop}{Proposition}

%opening
\title{Fourier and Beyond: Invariance Properties of a Family of Integral Transforms}
\author{
Cameron L. Williams}
\address{
Department of Mathematics \\ 
University of Houston \\ 
Houston, TX 77204-3008, USA}
\author{
Bernhard G. Bodmann}
\address{
Department of Mathematics \\ 
University of Houston \\ 
Houston, TX 77204-3008, USA}
\author{
Donald J. Kouri}
\address{
Departments of Physics, Mathematics and Mechanical Engineering \\
University of Houston \\ 
Houston, TX 77204-3008, USA}

\begin{document}

\begin{abstract} 
The Fourier transform is typically seen as closely related to the additive group of real numbers, its characters and its Haar measure. In this paper, we propose an alternative viewpoint; the Fourier transform can be uniquely characterized by an intertwining relation with dilations and by having a Gaussian as an eigenfunction. 
This broadens the perspective to an entire family of Fourier-like transforms that are uniquely identified by the same dilation property and having Gaussian-like functions as eigenfunctions. We show that these transforms share many properties with the Fourier transform, particularly unitarity, periodicity and eigenvalues. We also establish short-time analogues of these transforms and show a reconstruction property and an orthogonality relation for the short-time transforms.
\end{abstract}

\maketitle

\section{Introduction}

The Fourier transform is central to many results in science, engineering and mathematics. There are at least three incarnations of the Fourier transform on $\R$: One can understand it as an integral transform that applies to Lebesgue integrable functions; with an appropriate normalization it can be viewed as a unitary map on the Hilbert space $L^2({\mathbb R})$; and it is also possible to define the Fourier transform of distributions by appealing to duality.

Through its connection with the group structure of the real numbers, the Fourier transform appears to leave little flexibility in its design. 
Part of the popularity of wavelets in harmonic analysis can be attributed to the variety of ways that scaling functions and associated wavelets,
building blocks for signal analysis, can be chosen for different purposes in
time-frequency analysis. A structural difference between Fourier and wavelet analysis is the use of dilations 
in the definition of the wavelet transform which are related to the affine group.

This paper follows a path to the Fourier transform that orients itself closely with the structure of wavelets, making the intertwining relationship
with dilations a defining property instead of the relationship between translations and modulations.
 If one had to choose a ``scaling function'' associated
with the Fourier transform, it would arguably be the Gaussian; a suitably chosen Gaussian is an eigenvector of the Fourier transform, and its translates and
modulations are related by analytic continuation. The Gaussian is also an uncertainty minimizer, a fact that has relevance for the short-time Fourier transform,  which extracts local frequency information by modulating with a moving window and subsequently applying the Fourier transform.

The Gaussian also plays the role of a low-pass filter in some applications; however, while it is localized in both time and frequency, it is not considered to be close to an ideal low-pass filter \cite{bodmann}. Particularly, the Gaussian decays rapidly to zero, whereas a nearly ideal low-pass filter should be nearly one over an interval of interest and decay rapidly to zero outside of it. On the other hand, because of the nearly discontinuous behavior of an approximately ideal filter, one would not expect its Fourier transform to be well-localized which is desirable from a numerical point of view. The simplest way in which to address this deficiency is to find an integral transform with an eigenfunction possessing simultaneous localization in both domains, something that seems impossible when considering the various manifestations of the uncertainty principle in harmonic analysis.

In this paper we address the question of whether the Fourier transform can be generalized in some way so that this can be achieved and a more general frequency and time-frequency analysis can be performed. The key tenet will be to develop transforms that have low-pass filters as eigenfunctions. To uncover a more general family of integral transforms, we start by considering some of the most fundamental  properties of the Fourier transform. The Fourier transform of any $f \in L^1(\mathbb R)$ is by our convention
$$\mathcal Ff(\omega) = (2\pi)^{-1/2}\int_{\mathbb R} e^{-i \omega t} f(t) \,dt, \quad \omega \in \mathbb R\, .$$

It is well-known that the Fourier transform preserves the functional form of a Gaussian; particularly, if $g_1(t) = e^{-t^2/2}$, then $\mathcal{F}g_1 = g_1$. When restricted to $L^1(\mathbb R) \cap L^2(\mathbb R)$ the Fourier transform can be shown to be an isometry for the $L^2$ norm with a dense range in $L^2(\mathbb R)$ \cite{stein}, and thus $\mathcal F$ extends to a unique unitary operator on $L^2(\mathbb R)$. Abusing notation, we denote the integral transform and the associated unitary operator with the same symbol $\mathcal F$. In terms of the inner product and any two functions $f$ and $g$ in $L^2(\mathbb R)$, the unitarity is expressed as $\langle \mathcal F f,  g \rangle = \langle  f, \mathcal F^{-1} g\rangle$.

The Fourier transform also enjoys a dilation property: if $\alpha\in\R\setminus\{0\}$ and $\mathcal{D}_{\alpha}f(t) = \sqrt{|\alpha|}f(\alpha t)$, then $\mathcal{F}\mathcal{D}_{\alpha} = \mathcal{D}_{\alpha^{-1}}\mathcal{F}$. Furthermore, the Fourier transform satisfies $\mathcal{F}^2f(t) = f(-t)$ and $\mathcal{F}^4 = I$ so that $\mathcal{F}^* = \mathcal{F}^{-1} = \mathcal{F}^3$. Indeed, as suggested by this periodicity relation, its eigenvalues and spectrum are comprised of $\{\pm 1, \pm i\}$.

The main results in this paper establish that there is a family of integral transforms $\{\Phi_n\}_{n=1}^\infty$, each $\Phi_n$ densely defined on $ L^2(\mathbb R)$, which 
generalize the properties of the Fourier transform in the following way:
\begin{enumerate}
\item \label{item:gn} If $g_n(t)=e^{-\frac{t^{2n}}{2n}}$, $n \in \mathbb N$, then $\Phi_n g_n = g_n$.
\item \label{item:dilation} If $\alpha\in\R\setminus\{0\}$ and $\mathcal{D}_{\alpha}$ is the dilation operator given above, then $\Phi_n\mathcal{D}_{\alpha} = \mathcal{D}_{\alpha^{-1}}\Phi_n$.
\item \label{item:Phiniso} The operator $\Phi_n$ %is an isometry when restricted to a dense subspace of $L^2(\mathbb R)$ and it has a dense range, so it uniquely extends to a unitary operator on $L^2(\mathbb R)$.
is unitary and can be defined as an integral transform when its domain is suitably restricted to a dense set in $L^2(\mathbb R)$.
\item \label{item:spec} $\Phi_n^4 = I$ and its eigenvalues and spectrum are comprised only of $\pm 1, \pm i$.
\end{enumerate}

The Gaussian is a special case of a family of the Gaussian-like functions $\{g_n\}_{n=1}^\infty$ featured in property (\ref{item:gn}). The guiding principle for this paper is to retain as many properties of the Fourier transform as possible while demanding that $\Phi_n$ leaves $g_n$ invariant. Based on these axioms, we derive that $\Phi_n$ is defined as an integral transform,
\begin{equation*}
 \Phi_ng(\omega) = \int_{-\infty}^{\infty} \varphi_n(\omega t)g(t)\,dt, \quad \text{a.e. }\omega\in\R,
\end{equation*}

\noindent for each sufficiently regular function $g$ with the integral kernel $\varphi_n(\omega t) = c_n(\omega t) + i s_n(\omega t)$,
\begin{equation*}
 c_n(\eta) = \frac{1}{2}|\eta|^{n-\frac{1}{2}} J_{-1+\frac{1}{2n}}\left(\frac{|\eta|^n}{n}\right),
\end{equation*}

\noindent and
\begin{equation*}
 s_n(\eta) = -\frac{1}{2}\sgn(\eta)|\eta|^{n-\frac{1}{2}}J_{1-\frac{1}{2n}}\left(\frac{|\eta|^n}{n}\right).
\end{equation*}

\noindent The Fourier transform emerges as the special case $\mathcal{F} = \Phi_1$. The functions $c_n$ and $s_n$ are shown to be solutions to the eigenvalue equation of a (singular) Laplacian, $-\frac{d}{d\eta}\frac{1}{\eta^{2n-2}}\frac{d}{d\eta}g = g$. This shows that the asymptotic oscillatory behavior of the kernel can be tuned by choosing $n$, which is expected to be useful in applications of signal analysis to functions with chirp-like components. In addition to introducing the transforms $\Phi_n$ and their properties, we show that they give rise to associated short-time transforms which generalize the short-time Fourier transform.

The remainder of this paper is organized as follows. In Section $2$, we state a general fact about integral transforms satisfying the dilation property, develop the family of integral transforms and their kernels and show property (\ref{item:gn}). In Section $3$, we demonstrate some eigenfunctions of the transforms, discuss a function space on which $\Phi_n$ is defined and establish property (\ref{item:dilation}) for $\Phi_n$. In Section $4$, we establish property (\ref{item:Phiniso}) for the family of integral transforms, extend $\Phi_n$ to an isometry on all of $L^2(\R)$, show that the family is unitary on $L^2(\R)$ and show property (\ref{item:spec}). In Section $5$, we develop short-time analogues for the family of integral transforms and establish some identical results to those of the short-time Fourier transform.

\section{A Family of Integral Transforms}

We first want to show that a densely defined, bounded integral operator on $L^2(\R)$ satisfying the dilation property given in (\ref{item:dilation}) has an integral kernel, $\varphi$, of the form $\varphi(\omega, t) = f(\omega t)$ for some function $f$.

\begin{prop}
 Suppose $\mathcal{T}$ is a bounded integral operator defined on a dense subspace $\mathfrak{X}$ of $L^2(\R)$ such that $\mathfrak{X}$ is invariant under each $\mathcal{D}_{\alpha}$, $\alpha\neq 0$, and $\mathcal{T}\mathcal{D}_{\alpha}g = \mathcal{D}_{\alpha^{-1}}\mathcal{T}g$ for all $g\in\mathfrak{X}$. If $\varphi$ is a kernel for $\mathcal{T}$, then we can choose $\varphi$ to be of the form $\varphi(\omega, t) = f(\omega t)$ for some function $f$.
\end{prop}

\begin{proof}
 Suppose $g\in\mathfrak{X}$ and let $\alpha\in\R\setminus\{0\}$. $\mathcal{T}\mathcal{D}_{\alpha}g$ and $\mathcal{D}_{\alpha^{-1}}\mathcal{T}g$ are then defined almost everywhere. From the dilation property and by a change of variables, we have
 \begin{equation}
  \int_{-\infty}^{\infty} \varphi(\omega,\alpha t)g(t)\,dt = \int_{-\infty}^{\infty} \varphi(\omega\alpha, t)g(t)\,dt,\quad \text{a.e. } \omega\in\R.
 \end{equation}

 \noindent This identity also holds almost everywhere when choosing $g$ among a countable, dense subset of $\mathfrak{X}$, which is dense in $L^2(\R)$, and thus for a fixed $\alpha\neq 0$, $\varphi(\omega, \alpha t) = \varphi(\omega\alpha, t)$ for almost every $\omega,t\in\R$. Next, this identity is valid when selecting $\alpha$ from a countable set, say the rationals. Scaling then gives $\varphi(\omega\alpha^{-1},\alpha t) = \varphi(\omega, t)$ almost everywhere, which shows that the left hand side does not depend on $\alpha$. Now taking the limit $\alpha\to\omega$ through the rationals gives that $\varphi(\omega, t)$ is a function of $\omega t$, defined almost everywhere.
\end{proof}

Next we define the family of $n$-Gaussians.

\begin{defn}
For $n \in \mathbb N$, the $n$-Gaussian is the function $g_n \in L^2(\mathbb R)$ such that
$$
   g_n(t) = e^{-\frac{t^{2n}}{2n}} \, .
$$ 
\end{defn}

In analogy with the Fourier transform, we require in (\ref{item:gn}) that $g_n$ be invariant under the integral operator $\Phi_n$. The $n$-Gaussians behave as nearly ideal low-pass filters and as such are natural candidates for defining an integral transform. We denote the kernel of $\Phi_n$ by $\varphi_n$.

Our goal is to devise a family of operators that generalize the Fourier transform while retaining as many of its properties as possible. Specifically we look to an axiomatic characterization of the transforms. A similar axiomatic approach has been developed for the case of the Hilbert transform \cite{boche}. To this end, we inspect some properties of the Fourier kernel.

The most obvious properties of the Fourier kernel are that the real part is even, the imaginary part is odd and it is real analytic in both variables. There are multiple ways in which sine and cosine are related: as derivatives of each other, as distributional Hilbert transforms of each other and as linearly independent eigenfunctions of the Laplacian in one variable. The derivative operator does not play well with unitarity so this is not feasible; the distributional Hilbert transform, while rich with theory, is difficult to treat in practice. For these reasons, and more, we choose to use view sine and cosine as linearly independent solutions to the same differential equation.

We write $\varphi_n$ as
\begin{equation}
 \varphi_n(\omega,t) = c_n(\omega,t)+is_n(\omega,t),
\end{equation}

\noindent where $c_n$ and $s_n$ are real-valued. With these properties in mind, we state the following assumptions for $\varphi_n$:

\begin{enumerate}[(a)]
 {\indentitem \item $\varphi_n$ is of the form $\varphi_n(\omega,t) = f(\omega t)$ for some complex-valued function $f$,}
 {\indentitem \item $\varphi_n$ is real analytic,}
 {\indentitem \item $c_n$ is even and $s_n$ is odd,} \label{eq:phieo}
 {\indentitem \item $c_n$ and $s_n$ are linearly independent solutions to the same differential equation.}
\end{enumerate}

\noindent With the stipulated form for the integral kernel, \eqref{item:gn} becomes
\begin{equation}\label{eq:nGaussinv}
e^{-\frac{\omega^{2n}}{2n}} = \int_{-\infty}^{\infty}\varphi_n(\omega t)e^{-\frac{t^{2n}}{2n}}\,dt.
\end{equation}

From this integral expression we can deduce some immediate consequences for $\varphi_n$. Without assuming evenness of $c_n$, it could not be uniquely determined from \eqref{eq:nGaussinv} as any odd, slowly-growing function can be added to it and the integration against the $n$-Gaussian would be unchanged. Additionally, $s_n$ must be orthogonal to the $n$-Gaussians for all $\omega\in\R$, otherwise the right side of \eqref{eq:nGaussinv} would be complex whereas the left side is pure real. These observations support assumption (c) for $\varphi_n$. For now we will consider $c_n$ as it can be easily established from \eqref{eq:nGaussinv}.

\begin{defn}
 Let $n\in\mathbb{N}$, $l\in\mathbb{N}_0$ and
\begin{equation} \label{eq:cn_series}
 c(n;l) = \frac{(-1)^ln}{(2n)^{2l+\frac{1}{2n}}\Gamma\left(l+\frac{1}{2n}\right)l!}. \nonumber
\end{equation}

\noindent We then define $c_n$ as the entire function with the series
\begin{equation}\label{eq:cn}
 c_n(\eta) = \sum_{l=0}^{\infty} c(n;l)\eta^{2nl} \, , \qquad \eta \in \mathbb C.
\end{equation}
\end{defn}

\begin{lem} 
 Let the function $c_n$ be as in \eqref{eq:cn}, then, for $\omega\in\R$, it is real analytic and satisfies the integral equation
 \begin{equation*}
 e^{-\frac{\omega^{2n}}{2n}} = \int_{-\infty}^{\infty} c_n(\omega t)e^{-\frac{t^{2n}}{2n}} \,dt
\end{equation*}
\end{lem}

\begin{proof}
Substituting the stipulated form for $c_n$ into the integral equation gives
\begin{equation}
 e^{-\frac{\omega^{2n}}{2n}} = \int_{-\infty}^{\infty} \sum_{l=0}^{\infty} c(n;l)\omega^{2nl}t^{2nl} e^{-\frac{t^{2n}}{2n}} \,dt. \label{eq:sum}
\end{equation}
If the series given by
\begin{equation}\label{eq:fubini}
 \sum_{l=0}^{\infty} c(n;l) \omega^{2nl}\int_{-\infty}^{\infty}t^{2nl}e^{-\frac{t^{2n}}{2n}} \,dt
\end{equation}
converges absolutely for all $\omega\in\R$, then the integral and summation in \eqref{eq:sum} can be interchanged by the Fubini-Tonelli theorem. Substituting $y=\frac{t^{2n}}{2n}$ in the integral in \eqref{eq:fubini} yields
\begin{eqnarray}
 \int_{-\infty}^{\infty} t^{2nl}e^{-\frac{t^{2n}}{2n}} \,dt &=& \frac{1}{n}(2n)^{l+\frac{1}{2n}}\int_0^{\infty} y^{l+\frac{1}{2n}-1}e^{-y}\,dy\nonumber\\
&=& \frac{1}{n}(2n)^{l+\frac{1}{2n}}\Gamma\left(l+\frac{1}{2n}\right).\nonumber
\end{eqnarray}
Inserting this expression into \eqref{eq:fubini} yields the following series
\begin{equation}
 \sum_{l=0}^{\infty}\frac{1}{l!}\left(-\frac{\omega^{2n}}{2n}\right)^l.
\end{equation}

This series converges absolutely for all $\omega\in\R$ and so the integration and summation can be interchanged in \eqref{eq:sum}, resulting in $e^{-\frac{\omega^{2n}}{2n}}$ and thus the lemma is proved.
\end{proof}

In fact, for real $\eta$, $c_n$ has the closed-form expression:
\begin{equation}
 c_n(\eta) = \frac{1}{2}|\eta|^{n-\frac{1}{2}}J_{-1+\frac{1}{2n}}\left(\frac{|\eta|^n}{n}\right),
\end{equation}
where $J_{\nu}$ is the Bessel function of the first kind of order $\nu$ \cite[p. 40]{watson}. This can be checked directly by manipulating the Bessel function power series:
\begin{equation*}
 J_{\nu}(z) = \sum_{m=0}^{\infty} \frac{(-1)^m}{\Gamma(m+\nu+1)m!}\left(\frac{z}{2}\right)^{2m+\nu}.
\end{equation*}

Since $c_n$ can be expressed in terms of a Bessel function of the first kind, one may expect that $c_n$ is the solution to a second-order differential equation. We prove this in the next proposition.

\begin{prop}
 The function $c_n$ as defined in \eqref{eq:cn} is a solution to the differential equation
 \begin{equation}
  -\frac{d}{d\eta}\frac{1}{\eta^{2n-2}}\frac{d}{d\eta}c_n(\eta) = c_n(\eta).
 \end{equation}
\end{prop}

\begin{proof}
 Since the series defined in \eqref{eq:cn} converges uniformly on compact sets, we can differentiate the series term-by-term. Hence we have that
 \begin{eqnarray*}
  -\frac{d}{d\eta}\frac{1}{\eta^{2n-2}}\frac{d}{d\eta}c_n(\eta) &=& \sum_{l=0}^{\infty} \frac{(-1)^ln}{(2n)^{2l+\frac{1}{2n}}\Gamma\left(l+\frac{1}{2n}\right)l!}\left(-\frac{d}{d\eta}\frac{1}{\eta^{2n-2}}\frac{d}{d\eta}\right)\eta^{2nl} \\
  &=& \sum_{l=1}^{\infty} \frac{(-1)^l n}{(2n)^{2l+\frac{1}{2n}}\Gamma\left(l+\frac{1}{2n}\right)l!}(-2nl)(2nl-2n+1)\eta^{2nl-2n}.
 \end{eqnarray*}
 
Upon reindexing the series and making use of the recursive property of the gamma function, this becomes $c_n(\eta)$ as claimed.
\end{proof}

In fact, a more general property holds. If $\D_n$ denotes the operator $-\frac{d}{dt}\frac{1}{t^{2n-2}}\frac{d}{dt}$, defined on sufficiently regular entire functions, then for fixed $\omega\in\R$, $\D_n(c_n(\omega t)) = \omega^{2n}c_n(\omega t)$. This equality can be checked in a similar manner as above.

Using the above result and the assumptions that $s_n$ is real analytic and satisfies the same differential equation as $c_n$, we can now derive $s_n$---at least up to a multiplicative factor. We do so in the next proposition.

\begin{prop}
 The function $f$ defined by the series
 \begin{equation*}
  f(\eta) = \sum_{l=0}^{\infty} \frac{(-1)^l}{(2n)^{2l}\Gamma\left(l+2-\frac{1}{2n}\right)l!}\eta^{2nl+2n-1}
 \end{equation*}
 solves the differential equation $\D_nf(\eta) = f(\eta)$. Moreover, the solution set of entire functions to $\D_n g = g$ is spanned by $c_n$ and $f$ as defined above.
\end{prop}

\begin{proof}
 That $\D_n f = f$ follows via the same arguments in Proposition $2$. Moreover, suppose that $g$ is a solution to the differential equation $\D_n g = g$ and is given by the power series
 \begin{equation*}
  g(\eta) = \sum_{l=0}^{\infty} \alpha_l \eta^l
 \end{equation*}
for some $\alpha_l\in\R$. Then comparing the terms in the power series of $\D_ng$ and $g$, we get from
 \begin{eqnarray*}
  -\frac{d}{d\eta}\frac{1}{\eta^{2n-2}}\frac{d}{d\eta}g(\eta) &=& -\frac{d}{d\eta}\frac{1}{\eta^{2n-2}}\frac{d}{d\eta}\sum_{l=0}^{\infty} \alpha_l \eta^l \\
  &=& -\sum_{l=1}^{2n-2} \alpha_l l(l-2n+1)t^{l-2n} - \sum_{l=2n}^{\infty} \alpha_l l(l-2n+1)t^{l-2n}
 \end{eqnarray*}
 
 \noindent that $\alpha_1=\cdots = \alpha_{2n-2} = \alpha_{2n+1} = \cdots = \alpha_{4n-2} = \cdots = 0$. Thus the only nonzero coefficients are $\alpha_{2nl}$ and $\alpha_{2nl-1}$ for some $l$. By solving the recursion relations for the coefficients, it follows that the solutions to $\D_n g = g$ are linear combinations of $c_n$ and $f$ since $c_n$ and $f$ are linearly independent solutions to the same second order differential equation.
\end{proof}

With this result established, we make the following definition for $s_n$.

\begin{defn} 
 Let $n\in\mathbb{N}$, $l\in\mathbb{N}_0$ and
\begin{equation} \label{eq:sn_series}
 s(n;l) = -\frac{(-1)^ln}{(2n)^{2l+2-\frac{1}{2n}}\Gamma\left(l+2-\frac{1}{2n}\right)l!}. \nonumber
\end{equation}

\noindent We then define $s_n$ as the entire function with the series
\begin{equation}\label{eq:sn}
 s_n(\eta) = \sum_{l=0}^{\infty} s(n;l)\eta^{2nl+2n-1} \, , \qquad \eta \in \mathbb C.
\end{equation}
\end{defn}

As noted above, $s_n$ is only unique up to a multiplicative factor. The choice of $s_n$ above guarantees unitarity; in fact, the only other choice that gives unitarity is $-s_n$, in exact agreement with Fourier transform theory.

Like $c_n$, $s_n$ has a convenient representation in terms of a Bessel function of the first kind; particularly, we have that $s_n(\eta) = -\frac{1}{2}\sgn(\eta)|\eta|^{n-\frac{1}{2}}J_{1-\frac{1}{2n}}\left(\frac{|\eta|^n}{n}\right)$. If $n=1$, $\varphi(\eta) = \frac{1}{\sqrt{2\pi}}e^{-i\eta}$ as expected.

With these representations in terms of Bessel functions, we can inspect the asymptotic behavior of $\varphi_n$ easily. The Bessel function $J_{\nu}$ has the following asymptotic form \cite[p. 199]{watson}:
\begin{equation*}
 J_{\nu}(\eta) \sim \sqrt{\frac{2}{\pi\eta}}\cos\left(\eta - \frac{\nu\pi}{2} - \frac{\pi}{4}\right) + O\left(z^{-\frac{3}{2}}\right).
\end{equation*}

Hence $c_n$ and $s_n$ have the following asymptotic forms which will be useful in the analysis in the next section:
\begin{equation} \label{eq:cn_asym}
 c_n(\eta) \sim \sqrt{\frac{n}{2\pi}}|\eta|^{\frac{n-1}{2}}\cos\left(\frac{|\eta|^n}{n}+\frac{\pi}{4}\left(1-\frac{1}{n}\right)\right) + O\left(|\eta|^{-\frac{n+1}{2}}\right),
\end{equation}
\begin{equation} \label{eq:sn_asym}
 s_n(\eta) \sim \sqrt{\frac{n}{2\pi}}\sgn(\eta)|\eta|^{\frac{n-1}{2}}\cos\left(\frac{|\eta|^n}{n}-\frac{\pi}{4}\left(3-\frac{1}{n}\right)\right) + \sgn(\eta)O\left(|\eta|^{-\frac{n+1}{2}}\right).
\end{equation}

\section{The $\Phi_n$ transform and its domain}
\subsection{Developing the $\Phi_n$ transform}

When developing the Fourier transform in full generality, it is often first defined on functions in $L^1(\R)$ and then extended by considering limits of Cauchy sequences in the dense subset $L^1(\R)\cap L^2(\R)$ or $\mathcal{S}(\R)$ of $L^2(\R)$. For such functions, the results from the theory on $L^1(\R)$ are true as well which streamlines many proofs. We employ a similar approach in the present setting with a caveat: because the kernels diverge at infinity, the function space on which the integral transforms are defined cannot be all of $L^1(\R)$ but must be modified to mollify the growth of $\varphi_n$ at infinity.

%For convenience, we define the subspaces $L^2(\R)^+$ and $L^2(\R)^-$ to be the even and odd subspaces of $L^2(\R)$, respectively. It is clear that $L^2(\R) = L^2(\R)^+\oplus L^2(\R)^-$.

Let $d\mu_n(t) = |t|^{\frac{n-1}{2}}\,dt$. We claim that for $f\in L^1(\R,dt)\cap L^1(\R,d\mu_n)$, $\int_{\R}|\varphi_n(\omega t)f(t)|\,dt$ is finite. In the case of $n=1$, this space is identically $L^1(\R)$ which is the usual space upon which the Fourier transform is defined. Let $\omega\in\R$ be fixed, $f\in L^1(\R,dt)\cap L^1(\R,d\mu_n)$ and $R\gg 0$, then
\begin{align*}
 \int_{-\infty}^{\infty} |\varphi_n(\omega t)f(t)| \,dt &= \int_{|t|\le R}|\varphi_n(\omega t)| |f(t)|\,dt + \int_{|t|> R}|\varphi_n(\omega t)||f(t)|\,dt \\
 &\le M_1\int_{|t|\le R}|f(t)|\,dt + \sqrt{\frac{n}{2\pi}} |\omega|^{\frac{n-1}{2}}\int_{|t|>R} \left(|t|^{\frac{n-1}{2}}+O\left(|t|^{-\frac{n+1}{2}}\right) \right)|f(t)|\,dt.
\end{align*}

In the first term, we have used the fact that $\varphi_n$ is continuous and hence bounded on compact sets. The first integral is then finite since $f\in L^1(\R,dt)$. In the second term, we have used the asymptotic form for $\varphi_n$ as per \eqref{eq:cn_asym} and \eqref{eq:sn_asym}. The integral of $|f|$ against $|t|^{\frac{n-1}{2}}$ in the second term is finite since $f\in L^1(\R,d\mu_n)$ by hypothesis. Moreover the integral of $|f|$ against $O(|t|^{-\frac{n+1}{2}})$ in the second term is finite since for some $M_2 > 0$, $O(|t|^{-\frac{n+1}{2}})|f(t)| \le M_2 R^{-\frac{n+1}{2}}|f(t)|$ and $f\in L^1(\R,dt)$. Thus for $f\in L^1(\R,dt)\cap L^1(\R,d\mu_n)$, $\omega\mapsto\int_{\R}\varphi_n(\omega t)f(t)\,dt$ is defined pointwise.

Since we are ultimately interested in an $L^2$ theory, it stands to reason that we should consider the space $L^1(\R,dt)\cap L^1(\R,d\mu_n)\cap L^2(\R,dt)$. It is well-known that if $f\in L^1(\R,dt)\cap L^2(\R,dt)$, then $\mathcal{F}f\in L^2(\R,dt)$; however this is not obviously true in general. Thus the natural function space upon which $\Phi_n$ acts, denoted $\dom \Phi_n$, is given by
\begin{equation} \label{eq:phi_dom}
 \dom \Phi_n = \{f\in L^1(\R,dt)\cap L^1(\R,d\mu_n)\cap L^2(\R,dt):\omega\mapsto \int_{\R} \varphi_n(\omega t) f(t)\,dt\in L^2(\R,dt)\}.
\end{equation}

\noindent This is clearly a vector space however we postpone discussion of its density in $L^2(\R)$. With a formal domain, we may now define the $\Phi_n$ transform.

\begin{defn}
 Let $f\in \dom\Phi_n$ and $\omega\in\R$, then $\Phi_n f$ is defined pointwise by
 \begin{equation}
  \Phi_nf(\omega) = \int_{-\infty}^{\infty} \varphi_n(\omega t)f(t)\,dt.
 \end{equation}
\end{defn}

\noindent Clearly the dilation property (\ref{item:dilation}) holds for $f\in\dom\Phi_n$ which a simple change of variable shows. Before showing analyic properties of $\Phi_n$, we first explore some of its eigenfunctions as these will play an important role in the $L^2$ theory for $\Phi_n$.

\subsection{Some eigenfunctions of $\Phi_n$}

We have already demonstrated one eigenfunction for $\Phi_n$: $g_n$. From this, we can extract a family of eigenfunctions for $\Phi_n$ by implementing Akhiezer's technique since the kernel of $\Phi_n$ is of the form $\varphi_n(\omega, t) = f(\omega t)$. Since $g_n$ is an eigenfunction of $\Phi_n$ by hypothesis,
\begin{equation*}
 e^{-\frac{\omega^{2n}}{2n}} = \int_{-\infty}^{\infty}\varphi_n(\omega t)e^{-\frac{t^{2n}}{2n}}\,dt.
\end{equation*}
Making the changes of variables $t = \alpha^{\frac{1}{2n}}x$ and $\omega = \alpha^{-\frac{1}{2n}}y$ where $\alpha > 0$, we see that $\varphi_n$ is unchanged but we have
\begin{equation*}
 e^{-\frac{y^{2n}}{2n\alpha}} = \int_{-\infty}^{\infty} \varphi_n(xy)e^{-\alpha\frac{x^{2n}}{2n}}\alpha^{\frac{1}{2n}}\,dx.
\end{equation*}
\noindent Multiplying both sides by $\alpha^{-\frac{1}{4n}}$ yields the following
\begin{equation*}
 \alpha^{-\frac{1}{4n}}e^{-\frac{y^{2n}}{2n\alpha}} = \int_{-\infty}^{\infty}\varphi_n(xy)e^{-\alpha\frac{x^{2n}}{2n}}\alpha^{\frac{1}{4n}}\,dx.
\end{equation*}
\noindent We introduce the parameter $\beta = \frac{1}{\alpha}$ and note that $\alpha\frac{\partial}{\partial\alpha} = -\beta\frac{\partial}{\partial\beta}$. Thus
\begin{equation}
 \left(-\beta\frac{\partial}{\partial\beta}\right)^m\left(\beta^{\frac{1}{4n}}e^{-\beta\frac{y^{2n}}{2n}}\right) = \int_{-\infty}^{\infty}\varphi_n(xy)\left(\alpha\frac{\partial}{\partial\alpha}\right)^m\left(\alpha^{\frac{1}{4n}}e^{-\alpha\frac{x^{2n}}{2n}}\right)\,dx.
\end{equation}

To eliminate the dependence upon the parameters $\alpha$ and $\beta$, after differentiating they may be set to $1$. It is then clear that the even eigenfunctions are
\begin{equation}
 \phi_{2m}^{(n)}(t) = \left(\alpha\frac{\partial}{\partial\alpha}\right)^m\left(\alpha^{\frac{1}{4n}}e^{-\alpha\frac{t^{2n}}{2n}}\right)\bigg|_{\alpha=1}, \label{eq:evenefuncs}
\end{equation}
\noindent with eigenvalue $(-1)^m$. Particularly, $\Phi_n^2$ acts as the identity on these functions.

Taking cues from the Fourier transform, the Hermite-Gauss functions, and noting that the lowest power in the series for $s_n(\eta)$ is $\eta^{2n-1}$, the obvious candidate for an odd eigenfunction of $\Phi_n$ is $t^{2n-1}e^{-\frac{t^{2n}}{2n}}$. To see that this is indeed an eigenfunction of $\Phi_n$, note that
\begin{equation*}
 \int_{-\infty}^{\infty} \varphi_n(\omega t)t^{2n-1}e^{-\frac{t^{2n}}{2n}}\,dt = -i\sgn(\omega)|\omega|^{n-\frac{1}{2}} \int_0^{\infty} t^{3n-\frac{3}{2}}J_{1-\frac{1}{2n}}\left(\frac{|\omega|^n}{n}t^n\right)e^{-\frac{t^{2n}}{2n}}\,dt.
\end{equation*}

\noindent Letting $z = t^n$, this becomes
\begin{equation*}
 \int_{-\infty}^{\infty} \varphi_n(\omega t)t^{2n-1}e^{-\frac{t^{2n}}{2n}}\,dt = -\frac{i}{n}\sgn(\omega)|\omega|^{n-\frac{1}{2}} \int_0^{\infty} z^{2-\frac{1}{2n}}J_{1-\frac{1}{2n}}\left(\frac{|\omega|^n}{n}z\right)e^{-\frac{z^2}{2n}}\,dz.
\end{equation*}

\noindent This integral simplifies nicely \cite[p. 394]{watson} to give
\begin{equation*}
 \int_{-\infty}^{\infty} \varphi_n(\omega t)t^{2n-1}e^{-\frac{t^{2n}}{2n}}\,dt = -i \omega^{2n-1}e^{-\frac{\omega^{2n}}{2n}}.
\end{equation*}

Hence $t^{2n-1}e^{-\frac{t^{2n}}{2n}}$ is an eigenfunction of $\Phi_n$ with eigenvalue $-i$. Repeating the same analysis as above with the even eigenfunctions, we obtain the following odd eigenfunctions with eigenvalue $(-1)^{m+1}i$:
\begin{equation}
 \phi_{2m+1}^{(n)}(t) = t^{2n-1}\left(\alpha\frac{\partial}{\partial\alpha}\right)^m\left(\alpha^{1-\frac{1}{4n}}e^{-\alpha\frac{t^{2n}}{2n}}\right)\bigg|_{\alpha=1}.\label{eq:oddefuncs}
\end{equation}

\noindent Unlike in the case of the even eigenfunctions, $\Phi_n^2$ acts as the negative identity on the odd eigenfunctions.

Note that $\phi_m^{(n)}\in\dom\Phi_n$ for all $m$ and $n$. Moreover $\phi_m^{(n)}$ has eigenvalue $(-i)^m$ under $\Phi_n$. Since $\varphi_n$ has polynomial growth and is continuous, $|\varphi_n(\eta)| \le M_1 + M_2|\eta|^{\alpha}$ for some $M_1,M_2,\alpha > 0$. Noting that $\phi_m^{(n)}$ has exponential decay, it follows that
\begin{equation*}
 \int_{-\infty}^{\infty}\int_{-\infty}^{\infty}|\varphi_n(\omega t)\phi_m^{(n)}(t)\phi_{m'}^{(n)}(\omega)|\,dt\,d\omega \le \int_{-\infty}^{\infty} \int_{-\infty}^{\infty} (M_1 + M_2|\omega t|^{\alpha})|\phi_m^{(n)}(t)\phi_{m'}^{(n)}(\omega)|\,dt\,d\omega < \infty.
\end{equation*}

\noindent Hence by Fubini-Tonelli, we have that
\begin{equation*}
 \langle \phi_m^{(n)},\phi_{m'}^{(n)}\rangle = (-i)^m\langle\Phi_n\phi_m^{(n)},\phi_{m'}^{(n)}\rangle = (-i)^m\langle\phi_m^{(n)}, \overline{\Phi_n\phi_{m'}^{(n)}}\rangle = (-i)^{m-m'}\langle \phi_m^{(n)},\phi_{m'}^{(n)}\rangle,
\end{equation*}

\noindent and so if $m\not\equiv m' \pmod 4$, then $\langle \phi_m^{(n)},\phi_{m'}^{(n)}\rangle = 0$. This is in direct analogy with the traditional Fourier transform eigenfunctions: there are four mutually orthogonal eigenspaces.

Furthermore, $\{\phi_m^{(n)}\}$ is a complete set of eigenfunctions. To see this, note that $\phi_m^{(n)}$ is of the form $p_m^{(n)}(t)e^{-\frac{t^{2n}}{2n}}$, where $p_m^{(n)}$ is a polynomial of degree $2nk$ or $2nk-1$; moreover, $p_m^{(n)}$ is a linear combination of powers of the form $t^{2nl}$ or $t^{2nl-1}$, depending on whether $m$ is even or odd.

We can employ Gram-Schmidt to obtain an orthonormal set from the eigenfunctions; the orthonormal set is denoted by $\widetilde{p}_m^{(n)}(t)e^{-\frac{t^{2n}}{2n}}$, where $\widetilde{p}_m^{(n)}$ is a polynomial of degree $2nk$ or $2nk-1$---in general, $p_m^{(n)}$ and $\widetilde{p}_m^{(n)}$ need not be the same. Additonally, the Gram-Schmidt procedure only occurs within each eigenspace since the different eigenspaces are mutually orthogonal by the preceding argument.

Since $p_{2k}^{(n)}$ is comprised of powers $t^{2nl}$, we can view $\widetilde{p}_{2k}^{(n)}(t)$ as a polynomial $\widetilde{q}_{2m}^{(n)}(t^{2n})$. The orthogonality of the functions $\widetilde{p}_{2k}^{(n)}(t)e^{-\frac{t^{2n}}{2n}}$ can then be summarized as
\begin{equation*}
 \int_{-\infty}^{\infty} \widetilde{p}_{2k}^{(n)}(t)\widetilde{p}_{2l}^{(n)}(t)e^{-\frac{t^{2n}}{n}}\,dt = 2n^{1-\frac{1}{2n}}\delta_{kl}.
\end{equation*}

\noindent After a change of variable, this becomes
\begin{equation*}
 \int_0^{\infty} t^{-1+\frac{1}{2n}}\widetilde{q}_{2k}^{(n)}(nt)\widetilde{q}_{2l}^{(n)}(nt) e^{-t}\,dt = \delta_{kl}.
\end{equation*}

Proceeding in the same way for the odd eigenfunctions, we can view $\widetilde{p}_{2k+1}^{(n)}(t)$ as a polynomial $t^{2n-1}\widetilde{q}_{2k+1}^{(n)}(t^{2n})$. The orthogonality relation can again be summarized as
\begin{equation*}
 \int_{-\infty}^{\infty} \widetilde{p}_{2k+1}^{(n)}(t)\widetilde{p}_{2l+1}^{(n)}(t)e^{-\frac{t^{2n}}{n}}\, dt = \frac{1}{2}n^{-3+\frac{1}{n}}\delta_{kl}.
\end{equation*}

\noindent After making a change of variable, this becomes
\begin{equation*}
 \int_0^{\infty} t^{3-\frac{1}{n}} \widetilde{q}_{2k+1}^{(n)}(nt)\widetilde{q}_{2l+1}^{(n)}(nt) e^{-t}\,dt = \delta_{kl}.
\end{equation*}

Since $\widetilde{q}_k$ is a polynomial, the analysis by Akhiezer \cite[p. 61]{akhiezer} for the completeness of the Laguerre polynomials proves the completeness of eigenfunctions $\{\phi_m^{(n)}\}$ in $L^2(\R)$. The completeness of the Laguerre polynomials can be summarized as follows:
\begin{thm}
 Let $f:[0,\infty)\to\R$ be measurable and $\nu>-1$, then if
 \begin{enumerate}
  \item $\displaystyle\int_0^{\infty} e^{-x} x^{\nu} |f(x)|^2\,dx <\infty$,
  \item $\displaystyle\int_0^{\infty} e^{-x} x^{\nu} f(x) x^m\,dx = 0$
 \end{enumerate}
 
 \noindent for all $m\in\mathbb{N}_0$, then $f\equiv 0$.
\end{thm}

Furthermore, there is a convenient recursion relation for the $\phi_m^{(n)}$ which follows from \eqref{eq:evenefuncs} and \eqref{eq:oddefuncs}:
\begin{equation}
 \phi_{m+2}^{(n)}(t) = \frac{1}{4n}\phi_m^{(n)}(t) + \frac{t}{2n}\frac{d\phi_m^{(n)}}{dt}.
\end{equation}

Since the eigenfunctions $\phi_m^{(n)}$ of $\Phi_n$ are complete and $\phi_m^{(n)}\in L^1(\R,dt)\cap L^1(\R,d\mu_n)\cap L^2(\R,dt)$, it follows that $\dom \Phi_n$ is dense in $L^2(\R)$.

\section{Properties of $\Phi_n$}
\subsection{Preservation of the $L^2$ norm}

We wish to show that $\Phi_n$ is an $L^2$ isometry on $\dom\Phi_n$. Traditionally, the $L^2$ isometry of the Fourier transform from $L^1(\R,dt)\cap L^2(\R,dt)$ to $L^2(\R,dt)$ is proved by appealing to the convolution theorem. However no obvious convolution theorem exists for $\Phi_n$ in general and so we take a purely $L^2$ approach by appealing to the completeness of the eigenfunctions of $\Phi_n$.

\begin{thm}
 If $f\in \dom\Phi_n$, $\|\Phi_n f\|_{L^2(\R,dt)} = \|f\|_{L^2(\R,dt)}$, so $\Phi_n$ is an isometry with dense range and extends to a unitary on $L^2(\R,dt)$.
\end{thm}

\begin{proof}
 Let $\{\psi_m^{(n)}\}$ be an orthonormal basis of eigenfunctions of $\Phi_n$. Such a basis exists by the analysis in Section $3.2$. For $f\in\dom\Phi_n$, $\Phi_n f\in L^2(\R,dt)$ by hypothesis and so $\langle \Phi_nf,\psi_m^{(n)}\rangle$ is finite. Thus
 \begin{align*}
  \langle \Phi_n f,\psi_m^{(n)}\rangle &= \int_{-\infty}^{\infty} \Phi_nf(\omega)\overline{\psi_m^{(n)}(\omega)}\,d\omega \\
  &= \int_{-\infty}^{\infty}\left(\int_{-\infty}^{\infty} \varphi_n(\omega t)f(t)\,dt\right)\overline{\psi_m^{(n)}(\omega)}\,d\omega.
 \end{align*}

 We can interchange the integrals above since $\omega\mapsto\int_{\R} |\varphi_n(\omega t)f(t)|\,dt$ is finite everywhere and has at most polynomial growth at infinity and $\psi_m^{(n)}$ has exponential decay. Therefore
 \begin{equation*}
  \langle \Phi_nf,\psi_m^{(n)}\rangle = \int_{-\infty}^{\infty} f(t)\overline{\int_{-\infty}^{\infty}\overline{\varphi_n(\omega t)}\psi_m^{(n)}(\omega) \,d\omega}\,dt.
 \end{equation*}
 
 It is clear that $\int_{\R} \overline{\varphi_n(\omega t)}\psi_m^{(n)}(\omega)\,d\omega = i^m\psi_m^{(n)}(t)$, giving $\langle \Phi_nf,\psi_m^{(n)}\rangle = (-i)^m\langle f,\psi_m^{(n)}\rangle$. If we write $f = \sum_m \langle f,\psi_m^{(n)}\rangle \psi_m^{(n)}$, then $\Phi_n f = \sum_m \langle \Phi_n f,\psi_m^{(n)}\rangle \psi_m^{(n)} = \sum_m (-i)^m\langle f,\psi_m^{(n)}\rangle \psi_m^{(n)}$. Computing the norm of $\Phi_n f$, we have $\|\Phi_n f\|_{L^2(\R, dt)}^2 = \sum_m |(-i)^m\langle f,\psi_m^{(n)}\rangle|^2 = \sum_m |\langle f,\psi_m^{(n)}\rangle|^2 = \|f\|_{L^2(\R,dt)}^2$.
 
 Hence $\Phi_n$ is an $L^2$ isometry on $\dom\Phi_n$ which is dense in $L^2(\R,dt)$ and so $\Phi_n$ extends to an isometry on $L^2(\R,dt)$. Moreover, $\Phi_n$ has dense range in $L^2(\R,dt)$ since its range includes the span of the eigenfunctions $\{\phi_m^{(n)}\}$, thus $\Phi_n$ extends to a unitary on $L^2(\R,dt)$.
 \end{proof}

  In an abuse of notation, we denote the unitary extension of $\Phi_n$ to $L^2(\R,dt)$ by $\Phi_n$ though there is no risk of confusion as the meaning will be clear from context. Since the dilation property holds on $\dom\Phi_n$, $\Phi_n$ is bounded and $\mathcal{D}_{\alpha}$ is bounded, the dilation property holds for the unitary extension of $\Phi_n$ via simple continuity arguments.
 
\subsection{The Spectrum of $\Phi_n$}

By analogy with the Fourier transform, we wish to show that $\Phi_n$ satisfies $\Phi_n^4 f= f$ for each $f \in L^2(\R)$ which in turn gives that the spectrum of $\Phi_n$ is contained in $\{\pm 1,\pm i\}$.

\begin{thm}
 $\Phi_n^4=I$ on $L^2(\R)$ and its spectrum is comprised only of $\pm 1,\pm i$.
\end{thm}

\begin{proof}
 Let $f\in L^2(\R)$ and $\{\psi_m^{(n)}\}$ be an orthonormal basis of eigenfunctions for $\Phi_n$, then
 \begin{equation*}
  \langle \Phi_n^4f, \psi_m^{(n)}\rangle = \langle f,(\Phi_n^*)^4\psi_m^{(n)}\rangle = \langle f, i^{4m}\psi_m^{(n)}\rangle = \langle f,\psi_m^{(n)}\rangle.
 \end{equation*}

 Since this holds for all $m$, it must be the case that $\Phi_n^4 f = f$, i.e. $\Phi_n^4 = I$. This gives that $\Phi_n^* = \Phi_n^{-1} = \Phi_n^3$ naturally. This generalizes the well-known result for the Fourier transform which states that $\mathcal{F}^* = \mathcal{F}^{-1} = \mathcal{F}^3$.

The spectral mapping theorem \cite{rudin} shows that the spectrum of $\Phi_n$ is contained in $\{\pm 1,\pm i\}$. In fact, in Section $3$ we demonstrated that each of these spectral values is realized and each is indeed eigenvalue.
\end{proof}

\subsection{The $\Phi_n$ and Fourier-Bessel transforms}

 With the appearance of Bessel functions in the expression for $\varphi_n$, it is natural to ask what, if any, connection there is between $\Phi_n$ and the Fourier-Bessel transform. We choose to consider the following definition for the Fourier-Bessel transform:
 \begin{equation} \label{eq:Fourier_Bessel}
  \mathcal{F}_{\nu}f(\omega) = \int_0^{\infty} j_{\nu}(\omega t)f(t)\,d\lambda_{\nu}(t),
 \end{equation}

 \noindent where $d\lambda_{\nu}(t) = t^{2\nu+1}\,dt$ and $j_{\nu}(t) = t^{-\nu}J_{\nu}(t)$. Most analysis of the Fourier-Bessel transform is restricted to the case $\nu>-\frac{1}{2}$ as in this range the measure $d\lambda_{\nu}$ is non-singular (c.f. \cite{jaming}). Some analysis has been done in the regime $-1 < \nu < -\frac{1}{2}$, cf. \cite[p. 62]{akhiezer}. $\mathcal{F}_{\nu}$ is an isometry on $L^2(\R^+,d\lambda_{\nu})$ when restricted to a dense subspace and also extends to a unitary on $L^2(\R, d\lambda_{\nu})$.
 
 Write $\Phi_n = \Phi_n^+ + i\Phi_n^-$, where $\Phi_n^+$ is the integral operator with integral kernel $c_n$ and $\Phi_n^-$ is the integral operator with integral kernel $s_n$. $\Phi_n^+$ and $\Phi_n^-$ can be thought of as restrictions of $\Phi_n$ to even and odd functions, respectively. Thus $\Phi_n$ can be written as $\Phi_n = \Phi_n^+\oplus i\Phi_n^-$, where we have decomposed $\dom\Phi_n$ into its even and odd subspaces.
 
 To relate $\Phi_n$ to $\mathcal{F}_{\nu}$ we must project functions onto $\R^+$ since the Fourier-Bessel transform is restricted to $\R^+$. Let $\mathcal{P}^+$ denote the projection onto $\R^+$. If $f\in\dom\Phi_n$ is even, then there is a natural relationship between $\mathcal{P}^+f$ and $\Phi_n f$: $\Phi_nf = \Phi_n^+f = 2\Phi_n^+ \mathcal{P}^+f$. A similar relationship holds for odd functions. Thus we may restrict our attention to those $f\in\dom\Phi_n$ with support on $\R^+$ when considering $\Phi_n$ without loss of generality.
 
 Define the operators $\mathcal{S}_n^+:L^2(\R^+,dt)\to L^2(\R,d\lambda_{-1+\frac{1}{2n}})$ and $\mathcal{S}_n^-:L^2(\R,dt)\to L^2(\R^+,d\lambda_{1-\frac{1}{2n}})$ by $\mathcal{S}_n^+ f(t) = n^{-\frac{1}{2}+\frac{1}{2n}}f(\sqrt[n]{nt})$ and $\mathcal{S}_n^-f(t) = n^{-\frac{1}{2}+\frac{1}{2n}} t^{-2+\frac{1}{n}}f(\sqrt[n]{nt})$. $\mathcal{S}_n^+$ and $\mathcal{S}_n^-$ are both invertible and their inverses are given by a simple change of variable. Furthermore, $\Phi_n^+ = (\mathcal{S}_n^+)^{-1}\mathcal{F}_{-1+\frac{1}{2n}}\mathcal{S}_n^+$ and $\Phi_n^- = (\mathcal{S}_n^-)^{-1} \mathcal{F}_{1-\frac{1}{2n}} \mathcal{S}_n^-$. This gives the commutative diagrams shown in Figure [\ref{fig:1}].
 
 It is straightforward to show that $\mathcal{S}_n^{\pm}$ are isometries so the fact that $\Phi_n$ is an isometry is a consequence of $\mathcal{F}_{\nu}$ being an isometry. Instead of simply using this fact from the outset, we chose to supply new proofs as the literature for $\mathcal{F}_{\nu}$ when $-1 < \nu < -\frac{1}{2}$ is quite sparce. While $\Phi_n$ is closely related to the Fourier-Bessel transform and many properties of $\Phi_n$ can be gleaned from the Fourier-Bessel transform, they are inherently different. As far as the authors are aware, while there are extensions of the Fourier-Bessel transform to the whole real line (cf. \cite{rosler}), there are no analogous generalizations of the Fourier-Bessel transform to the whole real line that are similar to $\Phi_n$.
 \vspace{-5mm}
 \begin{center}
 \begin{figure}[htp]
 \begin{tikzpicture}[>=stealth]
  \node (a) at (0,0) {$L^2(\R^+,d\mu_{-1+\frac{1}{2n}})$};
  \node (b) at (0,2) {$L^2(\R^+,d\lambda)$};
  \node (c) at (5,2) {$L^2(\R^+,d\lambda)$};
  \node (d) at (5,0) {$L^2(\R^+,d\mu_{-1+\frac{1}{2n}})$};
  \draw[->] (b) to node [left] {$\mathcal{S}_n^+$} (a);
  \draw[->] (b) to node [above] {$\Phi_n^+$} (c);
  \draw[->] (c) to node [left] {$\mathcal{S}_n^+$} (d);
  \draw[->] (a) to node [above] {$\mathcal{F}_{-1+\frac{1}{2n}}$} (d);
 \end{tikzpicture}
 
 \hfill
 
 \begin{tikzpicture}[>=stealth]
  \node (a) at (0,0) {$L^2(\R^+,d\mu_{1-\frac{1}{2n}})$};
  \node (b) at (0,2) {$L^2(\R^+,d\lambda)$};
  \node (c) at (5,2) {$L^2(\R^+,d\lambda)$};
  \node (d) at (5,0) {$L^2(\R^+,d\mu_{1-\frac{1}{2n}})$};
  \draw[->] (b) to node [left] {$\mathcal{S}_n^-$} (a);
  \draw[->] (b) to node [above] {$\Phi_n^-$} (c);
  \draw[->] (c) to node [left] {$\mathcal{S}_n^-$} (d);
  \draw[->] (a) to node [above] {$\mathcal{F}_{1-\frac{1}{2n}}$} (d);
 \end{tikzpicture} 
 \caption{Commutative diagrams showing the relationships between $\Phi_n^+$ and $\Phi_n^-$ and the Fourier-Bessel transform.} \label{fig:1}
 \end{figure}
\end{center}
\vspace{-5mm}
\section{The Short-Time $\Phi_n$ Transform}

As a result of the linearity and exponential nature of the Fourier kernel, the Fourier transform of a translate of a function $f$ differs from the Fourier transform of $f$ by a modulation. There is unfortunately no similar relationship between the $\Phi_n$ transform of a function $f$ and a translate of $f$. The lack of translation invariance is not a severe drawback as many integral transforms in practice do not have this, e.g. the Fourier-Bessel and Mellin transforms. Consequently, the most natural setting for the $\Phi_n$ transform is in fact as a short-time transform. Recall that the short-time Fourier transform (STFT) \cite{grochenig} of a function $f\in \mathcal{S}(\R)$ with a window $g\in \mathcal{S}(\R)$ is given by
\begin{equation}
 \mathcal{V}_gf(\omega,t) = (2\pi)^{-1/2}\int_{-\infty}^{\infty}e^{-i\omega t'}g(t'-t)f(t')\,dt'. \label{eq:STFT}
\end{equation}

Employing the notation $f_t(t') = f(t'-t)$, this can be rewritten in a more tangible form: $\mathcal{V}_gf(\omega,t) = \mathcal{F}(g_tf)(\omega)$. \eqref{eq:STFT} can instead be written as $\mathcal{V}_gf(\omega, t) = e^{-i\omega t}\mathcal{F}(gf_{-t})(\omega)$, which can be interpreted as the Fourier kernel being centered with the window up to a phase factor. The second realization of the STFT will expedite the development of the short-time $\Phi_n$ transform.

Due to the translational invariance (up to a phase factor) of the Fourier transform, the window need not be centered with the kernel in the definition of the STFT since the power spectra for the two different formulations of the STFT given above are equivalent and thus carry the same information. However since the kernels for $n>1$ are no longer translation invariant, some ambiguity arises when considering short-time analogues of $\Phi_n$. We could consider two different definitions of the short-time $\Phi_n$ transform for a sufficiently nice windowing function $g$ and function $f$:
\begin{gather}
 \mathcal{V}_g^{(n)}f(\omega,t) = \int_{-\infty}^{\infty} \varphi_n(\omega t')g(t'-t)f(t')\,dt', \label{eq:wrongSTPhin} \\
 \mathcal{V}_g^{(n)}f(\omega,t) = \int_{-\infty}^{\infty} \varphi_n(\omega(t'-t))g(t'-t)f(t')\,dt'. \label{eq:STPhin}
\end{gather}

The former clearly resembles the STFT as given in \eqref{eq:STFT}, with $\varphi$ and $f$ centered at $t=0$ and the window $g$, centered at $t$, passing over both. Despite their very different natures, the two notions are in fact equivalent up to an interchange of $g$ and $f$ and a reflection in the time-frequency plane. However the latter definition is more desirable than the former: the short-time $\Phi_n$ transforms of $f$ and a translate of $f$ as given by \eqref{eq:STPhin} differ only by a translation in the time-frequency plane; this is not true with the realization in \eqref{eq:wrongSTPhin}.

Thus we choose to break with the established literature of simply sliding the window across the kernel and function and instead choose to center the kernel with the window $g$ and slide them across the function. That is, we choose the convention given in $\eqref{eq:STPhin}$. We now give the formal definition of the short-time $\Phi_n$ transform and prove two theorems regarding the short-time $\Phi_n$ transform: the reconstruction property and an orthogonality relation.

\begin{defn}
 Let $\omega,t \in\R$ and $g,f\in L^2(\R)$ such that $gh_{-t}\in L^2(\R)$ for all $t$. We define the short-time $\Phi_n$ transform of $f$ with window $g$ to be
\begin{equation}
 \mathcal{V}_g^{(n)}f(\omega,t) = \Phi_n(gf_{-t})(\omega).
\end{equation}
\end{defn}
If $f$ and $g$ are arbitrary functions in $L^2(\R)$, $\Phi_n(gf_{-t})$ may not exist since $gf_{-t}$ in general need not be in $L^2(\R)$, thus the prescription that $gf_{-t}\in L^2(\R)$ is necessary. This restriction is not very strong as it holds for all $f,g\in\mathcal{S}(\R)$ which is a dense subspace of $L^2(\R)$, but for the sake of mathematical rigor, we keep it. Assuming $\Phi_n(gf_{-t})$ exists in the original sense as an integral transform, e.g. if $f$ and $g$ are $n$-Gaussians, then the definition would be exactly as in \eqref{eq:STPhin}. Instead of restricting to functions on which $\Phi_n$ is defined naturally as an integral transform and then extending the results via density arguments, we prefer to work in full generality from the outset for simplicity of argument. With this definition, we may immediately state the theorem.

\begin{thm}
 Let $f,g\in L^2(\R)$ such that $gf_{-t}\in L^2(\R)$, then $f$ may be reconstructed from $\mathcal{V}_g^{(n)}f$ by the following
\begin{equation}
 f(t) = \frac{1}{\langle g,g\rangle}\int_{-\infty}^{\infty}\overline{g(t-\tau)}\Phi_n\mathcal{V}_g^{(n)}f(-t+\tau,\tau)\,d\tau,
\end{equation}

\noindent where $\Phi_n\mathcal{V}_g^{(n)}f$ is understood to be $\Phi_n$ acting on $h_{\tau}(\omega) = \mathcal{V}_g^{(n)}f(\omega,\tau)$, i.e. $\tau$ is constant.
\end{thm}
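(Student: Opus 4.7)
The plan is to recognize that, as a direct corollary of the even/odd decomposition lemmas established in the preceding section, $\Phi_n^2$ is precisely the reflection operator $R$ defined by $Rf(t) = f(-t)$. Indeed, any $f \in L^2(\mathbb{R})$ decomposes uniquely as $f = f^+ + f^-$ with $f^\pm \in L^2(\mathbb{R})^\pm$, and the two lemmas giving $\Phi_n^2 = I$ on $L^2(\mathbb{R})^+$ and $\Phi_n^2 = -I$ on $L^2(\mathbb{R})^-$ then yield $\Phi_n^2 f = f^+ - f^- = Rf$. This single identity is really the engine of the entire argument.

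With $\Phi_n^2 = R$ in hand, I would fix $\tau \in \mathbb{R}$ and set $f_\tau(\omega) = \mathcal{V}_g^{(n)}h(\omega, \tau) = \Phi_n(g_\tau h)(\omega)$, which is well-defined in $L^2(\mathbb{R})$ as a function of $\omega$ because $g_\tau h \in L^2(\mathbb{R})$ by hypothesis. Applying $\Phi_n$ to $f_\tau$ and invoking the preceding observation gives
$$
    \Phi_n f_\tau = \Phi_n^2(g_\tau h) = R(g_\tau h) ,
$$
and evaluating at $-t$ produces the pointwise relation $\Phi_n \mathcal{V}_g^{(n)}h(-t, \tau) = g(t-\tau) h(t)$.

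To close the argument, I would multiply both sides by $\overline{g(t-\tau)}$ and integrate in $\tau$, using the substitution $\sigma = t-\tau$ to recognize $\int_{-\infty}^{\infty} |g(t-\tau)|^2 \, d\tau = \langle g,g\rangle$. This gives
$$
    \int_{-\infty}^{\infty} \overline{g(t-\tau)} \, \Phi_n \mathcal{V}_g^{(n)}h(-t, \tau)\, d\tau = h(t) \int_{-\infty}^{\infty} |g(t-\tau)|^2 \, d\tau = \langle g, g\rangle \, h(t) ,
$$
and dividing by $\langle g,g\rangle$ yields the stated reconstruction formula.

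The main obstacle is not algebraic but interpretive: since $\Phi_n$ on $L^2(\mathbb{R})$ exists only as the continuous extension from the dense subspace $\bigcup_j {V_j}^+ \oplus \mathcal{H}({V_j}^+)$, the identity $\Phi_n \mathcal{V}_g^{(n)}h(-t, \tau) = g(t-\tau) h(t)$ must be read as an equality of $L^2$-functions in $t$ for each fixed $\tau$, and the final integration against $\overline{g(t-\tau)}$ must be justified via Fubini as an equality of $L^2$-functions in $t$. The cleanest route is to verify the formula first for $g, h \in \mathcal{S}(\mathbb{R})$—where all integrals converge absolutely and every manipulation is classical—and then extend by density using the fact that $\Phi_n$ and $h \mapsto g_\tau h$ are bounded on the relevant spaces. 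The algebraic content, however, rests entirely on the single identity $\Phi_n^2 = R$.
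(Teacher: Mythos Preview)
Your proposal is correct and follows essentially the same approach as the paper: both arguments reduce to computing $\Phi_n^2(g_\tau h)$ via the even/odd decomposition Lemmas~7 and~8, then integrate against $\overline{g(t-\tau)}$ to recover $\langle g,g\rangle h(t)$. Your packaging of the two lemmas into the single identity $\Phi_n^2 = R$ is slightly cleaner than the paper's version, which decomposes $g_\tau$ and $h$ separately and multiplies out the four terms, but the content is identical.
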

\begin{proof}
 We first consider the operation of $\Phi_n$ on $\mathcal{V}_g^{(n)}f$. This gives
\begin{equation*}
 \Phi_n\mathcal{V}_g^{(n)}f(-t+\tau,\tau) = \Phi_n(\Phi_n(gf_{-\tau})(\cdot))(-t+\tau) = \Phi_n^2(gf_{-\tau})(-t+\tau).
\end{equation*}
\noindent With the appearance of $\Phi_n^2$, it is natural to break $gf_{-\tau}$ into even and odd parts in order to make use of the fact that $\Phi_n^2$ acts as the identity on even functions and the negative identity on odd functions. We write $f_{-\tau} = f_{-\tau}^+ + f_{-\tau}^-$ and $g = g^+ + g^-$. Therefore it follows that
\begin{eqnarray*}
 \Phi_n\mathcal{V}_g^{(n)}f(-t+\tau,\tau) &=& \Phi_n^2((g^+ + g^-)(f_{-\tau}^+ + f_{-\tau}^-))(-t+\tau) \\
 &=& ((g^+-g^-)(f_{-\tau}^+-f_{-\tau}^-))(-t+\tau) \\
 &=& g(t-\tau)f_{-\tau}(t-\tau) \\
 &=& g(t-\tau)f(t).
\end{eqnarray*}
\noindent Then by above,
\begin{equation*}
 \frac{1}{\langle g,g\rangle}\int_{-\infty}^{\infty}\overline{g(t-\tau)}\Phi_n\mathcal{V}_g^{(n)}f(-t,\tau)\,d\tau = \frac{1}{\langle g,g\rangle}\int_{-\infty}^{\infty} \overline{g(t-\tau)}g(t-\tau)f(t)\,d\tau = f(t).
\end{equation*}
\noindent Thus the theorem is proved.
\end{proof}

With the ability to reconstruct a signal from its short-time $\Phi_n$ transform, it is natural to ask if energy is also preserved as is the case with the STFT. It so happens that an orthogonality relation holds regarding short-time $\Phi_n$ transforms---much like in the case of the STFT \cite{grochenig}---which immediately leads to energy preservation. We shall now state the theorem.

\begin{thm}
 Let $f,\tilde{f},g,\tilde{g}\in L^2(\R)$ such that $gf_{-t},\tilde{g}\tilde{f}_{-t}\in L^2(\R)$, then the following orthogonality relation holds
\begin{equation}
 \int_{\R^2}\mathcal{V}_g^{(n)}f(\omega,t)\overline{\mathcal{V}_{\tilde{g}}^{(n)}\tilde{f}(\omega,t)}\,d\omega \,dt = \langle f,\tilde{f}\rangle\langle g,\tilde{g}\rangle.
\end{equation}
\end{thm}

\begin{proof}
 From the definition of the short-time $\Phi_n$ transform, we have
\begin{eqnarray*}
 \int_{\R^2}\mathcal{V}_g^{(n)}f(\omega,t)\overline{\mathcal{V}_{\tilde{g}}^{(n)}\tilde{f}(\omega,t)}\,d\omega \,dt &=& \int_{-\infty}^{\infty}\int_{-\infty}^{\infty}\Phi_n(gf_{-t})(\omega)\overline{\Phi_n(\tilde{g} \tilde{f}_{-t})(\omega)}\,d\omega \,dt \\
&=& \int_{-\infty}^{\infty}\langle\Phi_n(gf_{-t}),\Phi_n(\tilde{g}{\tilde{f}}_{-t})\rangle_{\omega}\,dt,
\end{eqnarray*}
\noindent where the notation $\langle \cdot, \cdot\rangle_{\omega}$ is an inner product over $\omega$ (with $t$ fixed). Making use the unitarity of $\Phi_n$, this becomes
\begin{eqnarray*}
 \int_{\R^2}\mathcal{V}_g^{(n)}f(\omega,t)\overline{\mathcal{V}_{\tilde{g}}^{(n)}\tilde{f}(\omega,t)}\,d\omega\,dt &=& \int_{-\infty}^{\infty}\langle gf_{-t},\tilde{g}\tilde{f}_{-t}\rangle_{\omega}\,dt \\
&=& \int_{-\infty}^{\infty}\int_{-\infty}^{\infty}g(\omega)\overline{\tilde{g}(\omega)}f_{-t}(\omega)\overline{\tilde{f}_{-t}(\omega)}\,d\omega\, \,dt \\
&=& \int_{-\infty}^{\infty}g(\omega)\overline{\tilde{g}(\omega)}\int_{-\infty}^{\infty}f(\omega+t)\overline{\tilde{f}(\omega+t)}\,dt\,d\omega \\
&=& \langle f,\tilde{f}\rangle\langle g,\tilde{g}\rangle.
\end{eqnarray*}
\noindent Here we have employed Fubini's theorem. Taking $f = \tilde{f}$, $g = \tilde{g}$ and $\langle g,g\rangle = 1$, we see that $\|\mathcal{V}_g^{(n)}f\|_{L^2(\R^2,dt)}^2 = \|f\|_{L^2(\R,dt)}^2$ so the short-time $\Phi_n$ transform preserves energy.
\end{proof}

{\bf Acknowledgments.} The authors  C.L.W., B.G.B. and D.J.K. gratefully acknowledge partial support of this research by grants from Total E\&P USA and PGS. B.G.B. was supported in part by NSF grant DMS-1109545 and DMS-1412524. C.L.W. and D.J.K. acknowledge partial support of this research under Grant E-0608 from the Robert A. Welch Foundation.

\bibliographystyle{plain}
\bibliography{phi_transform_ref}

\begin{thebibliography}{1}

\bibitem{akhiezer}
N.~I. Akhiezer.
\newblock {\em Lectures on Integral Transforms}.
\newblock American Mathematical Society, 1988.

\bibitem{boche}
H.~Boche.
\newblock Eine axiomatische charaktersierung der hilbert-transformation.
\newblock {\em Acta Mathematica et Informatica Universitatis Ostraviensis},
  8:11--23, 2000.

\bibitem{bodmann}
B.~G. Bodmann, M.~Papadakis, and Q.~Sun.
\newblock An inhomogeneous uncertainty principle for digital low-pass filters.
\newblock {\em Journal of Fourier Analysis and Applications}, 12(2):181--211,
  2006.

\bibitem{jaming}
S.~Ghobber and P.~Jaming.
\newblock The logvinenko-sereda theorem for the fourier-bessel transform.
\newblock {\em Integral Transforms and Special Functions}, 24(6):470--484,
  2013.

\bibitem{grochenig}
K.~Gr\"{o}chenig.
\newblock {\em Foundations of Time-Frequency Analysis}.
\newblock Birkh\"{a}user, 2001.

\bibitem{rosler}
M.~R\"{o}sler and M.~Voit.
\newblock An uncertainty principle for hankel transforms.
\newblock {\em Proceedings of the American Mathematical Society}, 127:183--194,
  1999.

\bibitem{rudin}
W.~Rudin.
\newblock {\em Functional Analysis}.
\newblock McGraw-Hill, 1991.

\bibitem{stein}
E.~Stein and G.~Weiss.
\newblock {\em Introduction to Fourier Analysis on Euclidean Spaces}.
\newblock Princeton University Press, Princeton, N.J., 1971.

\bibitem{watson}
G.~N. Watson.
\newblock {\em A Treatise on the Theory of Bessel Functions}.
\newblock Cambridge University Press, Cambridge, U.K., 1944.

\end{thebibliography}

\end{document}